  \newtheorem{lm}{Lemma}
\newtheorem{te}{Theorem}
\begin{document}

\noindent

 \title{ A new formula for the generating function of  the numbers of  simple graphs } 

\author{Leonid Bedratyuk  and Anna Bedratyuk}
\address{Khmelnitskiy national university, Instytuts'ka, 11,  Khmelnits'ky, 29016, Ukraine}
\email{leonid.uk@gmal.com}

\begin{abstract}  By using an  approach of the invariant theory we obtain a new formula for the ordinary generating function of the numbers of the  simple graphs with $n$ nodes.

\end{abstract}
\maketitle

\noindent
{\bf Keywords:}
Invariant theory; simple graph; graph invariants; generating functions \\
{\bf 2000 MSC}:{ \it  13A50; 05C30 } \\

{\bf 1. Introduction.}
Let  $a_{n,i} $ be the number of simple graphs  with  $n$ vertices and  $k$ edges. Let  $$g_{n}(z)=\sum_{i=0}^{m} a_{n\!,\,i} z^i, m={n \choose 2},$$ be the ordinary generating function for the sequence  $\{a_{n,i} \}$, the OIES sequence $A008406$.  For the small  $n$ we have

\begin{align*}
&g_1(z)=1,\\
&g_2(z)=1+z, \\
&g_3(z)=1+z+z^2+z^3,\\
&g_4(z)= 1+z+2z^2+3z^3+2z^4+z^5+z^6.\\
\end{align*}
An expression for  $g_n(z)$ in terms of  group cycle index was found by  Harary in \cite{H4}. The result is based on Polya's efficient method for counting graphs, see  \cite{PP}  and \cite{H-P}.   Let $G$ be a permutation group acting on the set $[n]:=\{1,2, \ldots, n \}.$ It is 
well known that each permutation $\alpha$ in $G$ can be written uniquely as a product 
of disjoint cycles.    Let $j_i(\alpha)$ be the number 
of cycles of length $i, 1 \leq i \leq n$ in the disjoint cycle decomposition of $\alpha$. Then the 
cycle index of $G$  denoted $Z(G,s_1,s_2, \ldots,s_n),$ is the polynomial in the variables $s_1,s_2,\ldots,s_n$ defined by
$$
Z(G,s_1,s_2, \ldots,s_n)=\frac{1}{|G|}\sum_{\alpha \in G} \prod_{i=0}^n s_i^{j_i(\alpha)}.
$$

Denote by $[n]^{(2)}$  the set of  $2$-subsets of $[n].$ Let $S_n$ be a permutation group on the  set $[n].$ The 
pair group of $S_n$, denoted $S^{(2)}_n$ is the permutation group induced by $S_n$ which 
acts on $[n]^{(2)}$. Specifically, each permutation $\sigma \in S_n$  induces a permutation 
$\sigma' \in S^{(2)}_n$ such that for every element $ \{i,j\} \in [n]^{(2)}$ we  have 
$\sigma' \{i,j \} = \{\sigma i, \sigma j\}.$  

 In \cite{H4} F. Harary proved that  the generating function  $g_n(z)$
 is determined by substituting $1+z^k$ for each variable $s_k$ in the cycle index $Z(S^{(2)}_n,s_1,s_2, \ldots,s_n).$
 Symbolically 
 
$$
g_{n}(z)=Z(S_n^{(2)},1+z),
$$
where 
$$
Z(S_n^{(2)})=\frac{1}{n!} \sum_{j_1+2j_2+\cdots+n j_n=n}\frac{n!}{\prod\limits_{k=1}^n k^{j_k} j_k!} \prod_k s_{2k+1}^{k j_{2k+1}} \prod_k(s_k s_{2k}^{k-1})^{j_{2k}} s_k^{k \binom{j_k}{2}} \prod_{r<t}s^{(r,t)j_r j_r}_{[r,t]}. 
$$

In the paper by an  approach of the invariant theory we derive another formula for the generating function  $g_n(z)$. 

Let $\mathcal{V}_n$ be a vector space of weighted graphs on $n$ vertices over the field   $\mathbb{K}$, $\dim \mathcal{V}_n=m$. The group    $S^{(2)}_n$  acts naturally on  $\mathcal{V}_n$  by permutations of the basic vectors. Consider the corresponding action of the group $S^{(2)}_n$ on the algebra of polynomial functions   $\mathbb{K}[\mathcal{V}_n]$  and let  $\mathbb{K}[\mathcal{V}_n]^{S^{(2)}_n}$  be the corresponding algebra of invariants. Let  $\mathcal{V}^0_n$ be the set of simple graphs. The corresponding algebra of invariants  $k[\mathcal{V}^0_n]^{S^{(2)}_n}$  is a finite-dimensional vector space and can be expanded into the direct sum of its subspaces: 
$$
\mathbb{K}[\mathcal{V}_n^0]=(\mathbb{K}[\mathcal{V}_n^0])_0+(\mathbb{K}[\mathcal{V}_n^0])_1+\cdots+(\mathbb{K}[\mathcal{V}_n^0])_m.
$$

In the paper we have proved that    $\dim (\mathbb{K}[\mathcal{V}_n^0])_i=a_{n,i}$. Thus  the generating function  $g_n(z)$  coincides with the Poincar\'e series  $\mathcal{P}(\mathbb{K}[\mathcal{V}_n^0]^{S^{(2)}_n},z)$  of the algebra invariants  $\mathbb{K}[\mathcal{V}_n^0]^{S^{(2)}_n}.$

Let us identify the elements of the group   $S_n^{(2)}$ with    permutation $m \times m $ matrices and denote  $\text{{\rm \textbf{1}$_m$}}$ the identity  $m \times m $ matrix. In the paper we offer the following formula for  the generating function  $g_n(z)$:  

$$
g_{n}(z)=\frac{1}{n!} \sum_{\alpha  \in S_n^{(2)}} \frac{\det(\text{{\rm \textbf{1}$_m$}}-\alpha \cdot z^2)}{\det(\text{{\rm \textbf{1}$_m$}}-\alpha \cdot z)}.
$$

Also for the generating function  $m_n(z)$  of  multigraphs on  $n$ vertices we prove that  

 $$
m_{n}(z)=\frac{1}{n!} \sum_{\alpha  \in S_n^{(2)}} \frac{1}{\det(\text{{\rm \textbf{1}$_m$}}-\alpha  \cdot z)}.
$$


{\bf 2. Algebra of invariants of simple graphs.} 
Let $\mathbb{K}$ be a field of characteristic zero. Denote by  $\mathcal{V}_n$   the set of  undirected graphs on the vertices $\{1,...,n\}$  and
whose edges are weighted in $\mathbb{K}$. A simple graph is a graph
with weights in $\{0,1 \}$ and a multigraph is a graph with
weights in $\mathbb{K}$.
For  any pair   $\{i,j\}$ let    $\textbf{e}_{\{i,j \}}$  be  the simple
graph with one single edge   $\{i,j \} $ and let  $g_{\{i,j \}}\textbf{e}_{\{i,j \}}$  be the graph with one single edge   $\{i,j \} $  and with the weight  $g_{\{i,j \}} \in \mathbb{K}.$ The set  $\mathcal{V}_n$  is the vector space with the basis  $\left\langle \textbf{e}_{\{1,2 \}}, \textbf{e}_{\{1,3 \}}, \ldots \textbf{e}_{\{n-1,n \}} \right\rangle$ of dimension  $m=\displaystyle {n \choose 2}$. Indeed,  any graph can be written    uniquely  as a sum $\sum_{} g_{\{i,j \}} \textbf{e}_{\{i,j \}}.$
Let $\mathcal{V}_n^*$  be the dual space  with dual basis generated by the linear functions  ${x}_{\{i,j \}}$ for which   ${x}_{\{i,j \}}(\textbf{e}_{\{k,l\}})=\delta_{i k} \delta_{j l}.$
The symmetric group  $S_n$ acts on  $\mathcal{V}_n$ and on  $\mathcal{V}_n^*$  by  
$$
\sigma \, \textbf{e}_{\{i,j \}}=\textbf{e}_{\{\sigma(i),\sigma(j) \}}, \sigma^{-1} {x}_{\{i,j \}}={x}_{\{\sigma(i),\sigma(j) \}}.
$$
Let us expand  the action on  the algebra of polynomial functions  $\mathbb{K}[\mathcal{V}_n]=\mathbb{K}[\{{x}_{\{i,j \}}\}].$

We say that a polynomial function   $f \in \mathbb{K}[{x}_{\{i,j \}}]$ of $m$ variables ${x}_{\{i,j \}}$ is a $S_n$-invariant if  $\sigma f=f$ for all  $\sigma \in S_n.$  The   $S_n$-invariants  form a subalgebra   $\mathbb{K}[\mathcal{V}_n]^{S_n}$  which is called the algebra of invariants of the vector space of the weighted graphs in    $n$ vertices. It is clear that there is an isomorphism   $\mathbb{K}[\mathcal{V}_n]^{S_n} \cong \mathbb{K}[{x}_{\{i,j \}}]^{S_n}.$

 For convenience, we introduce a new set of variables:
$$
\{x_1,x_2,\ldots,x_m \}=\{ {x}_{\{1,2 \}}, {x}_{\{1,3 \}}, \ldots, {x}_{\{n-1,n \}}\}.
$$
Then the action of  $S_n$  on the set  $\{ {x}_{\{1,2 \}}, {x}_{\{1,3 \}}, \ldots, {x}_{\{n-1,n \}}\}$ induces its action of the pair group  $S^{(2)}_n$ on the set  $\{x_1,x_2,\ldots,x_m \}.$
We have  
 $$
\mathbb{K}[{x}_{\{i,j \}}]^{S_n}  \cong \mathbb{K}[x_1,x_2,\ldots,x_m]^{S^{(2)}_n}.
$$
In this notation any graph can be written in the way 
$$
g_1 \textbf{e}_{1} +g_2 \textbf{e}_{2}+\cdots+g_m \textbf{e}_{m}, g_i \in \mathbb{K},
$$ 
where  $\textbf{e}_{s}$ is the edge which connect the vertices $\{ i',j' \}$ if the pair  $\{ i',j' \}$ has got   the number  $s.$  Thus,
 in this case the old variable  $x_{\{ i',j' \}}$  corresponds to the new variable  $x_s.$

Since for the simple graphs all its weights are 
 $0,1$ then the reduction of the algebra $\mathbb{K}[\mathcal{V}_n]^{S_n}$  on the set of simple graphs has a simple structure.

Denote by  $\mathcal{V}_n^0$ the set of all simple graphs on $n$ vertices: 
$$
\mathcal{V}_n^0=\left \{ \sum_{i=0}^m {g}_{i}\textbf{e}_{i} \mid {g}_{i} \in \{0,1\} \right \} \subset \mathcal{V}_n,
$$

The corresponding subalgebra of polynomial function $\mathbb{K}[\mathcal{V}_n^0] \subset \mathbb{K}[\mathcal{V}_n]$ is generated by polynomial functions  $x_i$ which on every simple graph only take  values  $1$ or $0.$

Let us consider the ideal $I_m=(x_1^2-x_1,x_2^2-x_2 \ldots,x_m^2-x_m)$ in the algebra  $\mathbb{K}[\mathcal{V}_n]=\mathbb{K}[x_1,x_2,\ldots,x_m]$.  The following statement golds:

\begin{te}\label{1}
$$
\begin{array}{ll}
(i) &\mathbb{K}[\mathcal{V}_n^0]\cong \mathbb{K}[x_1,x_2,\ldots,x_m]/I_m,\\
& \\
(ii) &\mathbb{K}[\mathcal{V}_n^0]^{S^{(2)}_n}\cong \mathbb{K}[x_1,x_2,\ldots,x_m]^{S^{(2)}_n}/I_m.
\end{array}
$$
\end{te}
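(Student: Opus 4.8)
The plan is to obtain both isomorphisms from a single restriction homomorphism, with part (ii) following from part (i) by exactness of the invariants functor in characteristic zero. For part (i), consider the evaluation map $\rho : \mathbb{K}[x_1,\ldots,x_m] \to \mathbb{K}[\mathcal{V}_n^0]$ sending a polynomial to its restriction to the finite set $\mathcal{V}_n^0$. Identifying $\mathcal{V}_n \cong \mathbb{K}^m$ through the basis $\mathbf{e}_1,\ldots,\mathbf{e}_m$, the set $\mathcal{V}_n^0$ is exactly the vertex cube $\{0,1\}^m$, so it has $2^m$ points, and I take $\mathbb{K}[\mathcal{V}_n^0]$ to be the algebra of restrictions of polynomial functions, i.e.\ the image of $\rho$. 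In fact $\rho$ is onto the full algebra of $\mathbb{K}$-valued functions on $\{0,1\}^m$: for each point $p=(p_1,\ldots,p_m)$ the polynomial $\prod_{p_i=1}x_i\prod_{p_i=0}(1-x_i)$ restricts to the indicator of $p$, and these indicators form a basis, so $\dim_{\mathbb{K}}\mathbb{K}[\mathcal{V}_n^0]=2^m$.

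First I would check the inclusion $I_m\subseteq\ker\rho$: each generator $x_i^2-x_i=x_i(x_i-1)$ vanishes at every point of $\{0,1\}^m$. Hence $\rho$ factors through a surjection $\bar\rho:\mathbb{K}[x_1,\ldots,x_m]/I_m\to\mathbb{K}[\mathcal{V}_n^0]$. On the other hand, modulo $I_m$ one has $x_i^2\equiv x_i$, so every monomial reduces to a squarefree one $\prod_{i\in S}x_i$ with $S\subseteq\{1,\ldots,m\}$; these $2^m$ squarefree monomials span the quotient, giving $\dim_{\mathbb{K}}\mathbb{K}[x_1,\ldots,x_m]/I_m\le 2^m$. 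Comparing with $\dim_{\mathbb{K}}\mathbb{K}[\mathcal{V}_n^0]=2^m$, the surjection $\bar\rho$ between spaces of equal finite dimension is an isomorphism, which proves (i) and incidentally shows $\ker\rho=I_m$.

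For part (ii), the key observation is that everything above is $S_n^{(2)}$-equivariant. Writing $G=S_n^{(2)}$, the group permutes the coordinates, hence stabilizes $\mathcal{V}_n^0=\{0,1\}^m$ and permutes the generators of $I_m$ among themselves, so $I_m$ is a $G$-stable ideal and $\bar\rho$ is an isomorphism of $G$-algebras. Applying $(-)^G$ to the short exact sequence $0\to I_m\to\mathbb{K}[x_1,\ldots,x_m]\to\mathbb{K}[x_1,\ldots,x_m]/I_m\to 0$ and using $\mathrm{char}\,\mathbb{K}=0$, the Reynolds operator $R(f)=\frac{1}{|G|}\sum_{\sigma\in G}\sigma f$ realizes $\mathbb{K}[x_1,\ldots,x_m]^G$ as a direct summand and makes $(-)^G$ exact, surjectivity onto $(\mathbb{K}[x_1,\ldots,x_m]/I_m)^G$ holding because $R$ fixes already-invariant classes. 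Therefore $(\mathbb{K}[x_1,\ldots,x_m]/I_m)^G\cong\mathbb{K}[x_1,\ldots,x_m]^G/(I_m\cap\mathbb{K}[x_1,\ldots,x_m]^G)$, and combining this with the $G$-equivariant isomorphism of (i) yields (ii).

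The routine part is (i): it is a dimension count once one spots the squarefree-monomial basis of the quotient and the indicator polynomials spanning the function algebra. The step deserving the most care, and where $\mathrm{char}\,\mathbb{K}=0$ is essential, is the claim that taking $S_n^{(2)}$-invariants commutes with the quotient by $I_m$; I would state explicitly that $(-)^G$ is exact via the Reynolds operator, and I would pin down the meaning of ``$/I_m$'' in the invariant ring as the contraction $I_m\cap\mathbb{K}[x_1,\ldots,x_m]^{G}$, since the given generators $x_i^2-x_i$ of $I_m$ are not themselves invariant and so do not directly generate an ideal of $\mathbb{K}[x_1,\ldots,x_m]^{G}$.
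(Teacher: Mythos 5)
Your proposal is correct, and its skeleton is the same as the paper's: both arguments rest on the restriction-to-$\{0,1\}^m$ homomorphism (the paper's $\gamma$, your $\rho$) with kernel $I_m$, and both reduce part (ii) to compatibility of this map with the $S_n^{(2)}$-action. The difference is in what is actually proved versus asserted, and your version fills two real gaps. For (i), the paper simply declares ``it is clear that the kernel of the endomorphism is exactly the ideal $I_m$''; your dimension count --- the $2^m$ squarefree monomials span $\mathbb{K}[x_1,\ldots,x_m]/I_m$, while the indicator polynomials $\prod_{p_i=1}x_i\prod_{p_i=0}(1-x_i)$ show the image of $\rho$ has dimension exactly $2^m$ --- is an actual proof of that claim, and it also makes precise what $\mathbb{K}[\mathcal{V}_n^0]$ is as an algebra of functions. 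For (ii), the paper verifies only that $\gamma$ is $G$-equivariant on monomials and calls this ``enough''; but equivariance by itself yields only that invariants map to invariants with kernel $I_m\cap\mathbb{K}[x_1,\ldots,x_m]^G$ --- surjectivity onto $\mathbb{K}[\mathcal{V}_n^0]^{G}$ requires lifting an invariant function to an invariant polynomial, which is exactly the Reynolds-operator (exactness of $(-)^G$ in characteristic zero) step you make explicit. Finally, you are right to insist that ``$/I_m$'' in statement (ii) can only mean the quotient by the contraction $I_m\cap\mathbb{K}[x_1,\ldots,x_m]^{G}$, since the generators $x_i^2-x_i$ are not themselves invariant; the paper never addresses this abuse of notation, and flagging it is a genuine improvement in precision rather than a deviation in method.
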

\begin{proof}  
On the ring of polynomial functions  $\mathbb{K}[x_1,x_2,\ldots,x_m]$ let us introduce a binary  relation  $\sim$: $f \sim g$ if  $f=g$,  considered as functions from $\{ 0,1 \}^m$  to $\mathbb{K}$. 
Obviously that  $x_i^p \sim x_i,$ for all $p \geq 1$. Define an endomorphism   $\gamma: \mathbb{K}[\mathcal{V}_n] \to \mathbb{K}[\mathcal{V}_n^0]$ by the way: $$\gamma(x_i^p)=x_i.$$ It is clear that the kernel of the endomorphism is exactly the ideal $I_m $. Then  
$$ 
\mathbb{K}[\mathcal{V}_n^0]\cong \mathbb{K}[x_1,x_2,\ldots,x_m]/I_m.
$$
Note that the algebra  $\mathbb{K}[\mathcal{V}_n^0]$  is a finite dimensional vector space of the dimension  $2^m$  with the basis 
$$
1, x_1,x_2,\ldots,x_n, x_1 x_2, x_1 x_2, \ldots, x_{m-1}x_m, \ldots, x_1 x_2\cdots x_m.
$$

$(ii)$ It is enough to prove that  $\gamma$ commutes with the action of the group $S_n^{(2)}$. Without  lost of generality  it  is sufficient to check on the monomials. For arbitrary element $\sigma \in S_n^{(2)}$ and for arbitrary  monomial   $x_1^{k_1}x_2^{k_2}\cdots x_s^{k_s}$, $s \leq m$ we  have
\begin{gather*}
\gamma(\sigma^{-1}(x_1^{k_1}x_2^{k_2}\cdots x_s^{k_s}))=\gamma(x_{\sigma(1)}^{k_1}x_{\sigma(2)}^{k_2}\cdots x_{\sigma(s)}^{k_s})=x_{\sigma(1)}x_{\sigma(2)}\cdots x_{\sigma(s)}=\\
=\sigma^{-1}(x_1 x_2  \cdots x_s )=\sigma^{-1}\left( \gamma(x_1^{k_1}x_2^{k_2}\cdots x_s^{k_s}) \right).
\end{gather*}
\end{proof}

If we know the algebra of invariants  $\mathbb{K}[\mathcal{V}_n]^{S^{(2)}_n}$  then we are able to find the algebra of invariants  $\mathbb{K}[\mathcal{V}_n^0]^{S^{(2)}_n}$ of simple graphs. Indeed, if the invariants  $f_1, f_2, \ldots, f_s$  generate the algebra $\mathbb{K}[\mathcal{V}_n]^{S^{(2)}_n}$ then the surjectivity  of $\gamma$ implies  that  the invariants $\gamma(f_1), \gamma(f_2), \ldots, \gamma(f_s)$ generate the algebra  $\mathbb{K}[\mathcal{V}_n^0]^{S^{(2)}_n}.$

\textbf{Example.} Let us consider the case  $n=4$.  The algebra of invariants  $\mathbb{K}[\mathcal{V}_4]^{S_4^{(2)}}$ is well known, see  \cite{HST},
and its minimal generating system consists of the following 9  invariants: 
\begin{gather*}
R(x_1)=\frac{1}{6}(x_1+x_2+x_3+x_4+x_5+x_6),
R(x_1^2)=\frac{1}{6}(x_1^2+x_2^2+x_3^2+x_4^2+x_5^2+x_6^2),\\
R(x_1 x_6)=\frac{1}{3}(x_{{1}}x_{{6}}+x_{{2}}x_{{5}}+\,x_{{3}}x_{{4}}),
R(x_1^3)=\frac{1}{6}(x_1^3+x_2^3+x_3^3+x_4^3+x_5^3+x_6^3),\\
24 R(x_1^2 x_2)={x_{{1}}}^{2}x_{{2}}+{x_{{1}}}^{2}x_{{3}}+{x_{{2}}}^{2}x_{{1}}+{x_{{2}
}}^{2}x_{{3}}+{x_{{3}}}^{2}x_{{1}}+{x_{{3}}}^{2}x_{{2}}+{x_{{1}}}^{2}x
_{{4}}+{x_{{1}}}^{2}x_{{5}}+{x_{{4}}}^{2}x_{{1}}+\\+{x_{{4}}}^{2}x_{{5}}+
{x_{{5}}}^{2}x_{{1}}+{x_{{5}}}^{2}x_{{4}}+{x_{{2}}}^{2}x_{{4}}+{x_{{2}
}}^{2}x_{{6}}+{x_{{4}}}^{2}x_{{2}}+{x_{{4}}}^{2}x_{{6}}+{x_{{6}}}^{2}x
_{{2}}+{x_{{6}}}^{2}x_{{4}}+{x_{{3}}}^{2}x_{{5}}+\\+{x_{{3}}}^{2}x_{{6}}+
{x_{{5}}}^{2}x_{{3}}+{x_{{5}}}^{2}x_{{6}}+{x_{{6}}}^{2}x_{{3}}+{x_{{6}
}}^{2}x_{{5}}\\
R(x_1 x_2 x_3)=\frac{1}{4}(x_{{1}}x_{{3}}x_{{2}}+x_{{1}}x_{{5}}x_{{4}}+x_{{2}}x_{{
6}}x_{{4}}+x_{{3}}x_{{6}}x_{{5}}
),\\
R(x_1^4)=\frac{1}{6}(x_1^4+x_2^4+x_3^4+x_4^4+x_5^4+x_6^4),R(x_1^5)=\frac{1}{6}(x_1^5+x_2^5+x_3^5+x_4^5+x_5^4+x_6^5),\\
24 R(x_1^3 x_2)={x_{{1}}}^{3}x_{{2}}+{x_{{1}}}^{3}x_{{3}}+{x_{{2}}}^{3}x_{{1}}+{x_{{2}
}}^{3}x_{{3}}+{x_{{3}}}^{3}x_{{1}}+{x_{{3}}}^{3}x_{{2}}+{x_{{1}}}^{3}x
_{{4}}+{x_{{1}}}^{3}x_{{5}}+{x_{{4}}}^{3}x_{{1}}+\\+{x_{{4}}}^{3}x_{{5}}+
{x_{{5}}}^{3}x_{{1}}+{x_{{5}}}^{3}x_{{4}}+{x_{{2}}}^{3}x_{{4}}+{x_{{2}
}}^{3}x_{{6}}+{x_{{4}}}^{3}x_{{2}}+{x_{{4}}}^{3}x_{{6}}+{x_{{6}}}^{3}x
_{{2}}+{x_{{6}}}^{3}x_{{4}}+{x_{{3}}}^{3}x_{{5}}+\\+{x_{{3}}}^{3}x_{{6}}+
{x_{{5}}}^{3}x_{{3}}+{x_{{5}}}^{3}x_{{6}}+{x_{{6}}}^{3}x_{{3}}+{x_{{6}
}}^{2}x_{{5}}.
\end{gather*}
Here  $$R=\frac{1}{n!} \sum_{g \in S_n^{(2)}} g$$  is the Reinolds group action averaging operator which  is a projector from $\mathbb{K}[\mathcal{V}_n]$ into $\mathbb{K}[\mathcal{V}_n]^{S_n^{(2)}}.$

We  have  that  $$\gamma(R(x_1^5))=\gamma(R(x_1^4))=\gamma(R(x_1^3))=\gamma(R(x_1^2))=R(x_1),\gamma(R(x_1^2 x_2))=R(x_1 x_2).$$ 
Therefore, the algebra of invariants  $\mathbb{K}[\mathcal{V}_4^0]^{S_4^{(2)}}$ of simple graphs on $n$ vertices is generated by the $4$ invariants:
$$
R(x_1), R(x_1 x_6), R(x_1 x_2), R(x_1,x_2,x_3).
$$

So far, the algebra of invariants $\mathbb{K}[\mathcal{V}_n]^{S^{(2)}_n}$ is calculated only for  $n\leq 5$,  see \cite{NT}. 


{\bf 3.  The Poincar\'e series of the algebra  $\mathbb{K}[\mathcal{V}_n^0]^{S^{(2)}_n}.$}
Let us consider  the algebra    $\mathbb{K}[\mathcal{V}_n^0]$  as a vector space. Then    the following  decomposition into  the direct  sum of its subspaces holds:
$$
\mathbb{K}[\mathcal{V}_n^0]=(\mathbb{K}[\mathcal{V}_n^0])_0+(\mathbb{K}[\mathcal{V}_n^0])_1+\cdots+(\mathbb{K}[\mathcal{V}_n^0])_m,
$$
where  $(\mathbb{K}[\mathcal{V}_n^0])_i$ is the vector space generated by the elements 
$$ x_1^{\varepsilon_1}x_2^{\varepsilon_2} \cdots x_m^{\varepsilon_m}, \varepsilon_1+\varepsilon_2+\cdots+\varepsilon_m=i, \text{  where } \varepsilon_k=0 \text{  or   }  \varepsilon_k=1.
$$

Also, for the  algebra 
   $\mathbb{K}[\mathcal{V}_n^0]^{S^{(2)}_n}$  the decomposition holds: 
$$
\mathbb{K}[\mathcal{V}_n^0]^{S^{(2)}_n}=(\mathbb{K}[\mathcal{V}_n^0]^{S^{(2)}_n})_0+(\mathbb{K}[\mathcal{V}_n^0]^{S^{(2)}_n})_1+\cdots+(\mathbb{K}[\mathcal{V}_n^0]^{S^{(2)}_n})_m.
$$
Since, the Reynolds operator is a projector which save the degree of a polynomial  then  the component   $(\mathbb{K}[\mathcal{V}_n^0]^{S^{(2)}_n})_i$  is generated by the following elements 
$$ R(x_1^{\varepsilon_1}x_2^{\varepsilon_2} \cdots x_m^{\varepsilon_m}).
$$

Particularly, we have  that $(\mathbb{K}[\mathcal{V}_n^0]^{S^{(2)}_n})_0=\mathbb{K}$. Also,  the component  $(\mathbb{K}[\mathcal{V}_n^0]^{S^{(2)}_n})_m$ has the dimension  $1$  and it is generated by the polynomial  $R(x_1 x_2 \cdots x_m) =  x_1 x_2 \cdots x_m.$ 
  
Let us now give an interpretation of  $\dim(\mathbb{K}[\mathcal{V}_n^0]^{S^{(2)}_n})_i$  in terms of the graph theory. The following important theorem holds.
\begin{te}\label{t2}
The dimension   $\dim(\mathbb{K}[\mathcal{V}_n^0]^{S^{(2)}_n})_i$ equal to  the number of non-isomorphic simple graphs with $n$ vertices and $i$ edges.
\end{te}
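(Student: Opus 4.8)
The plan is to reduce the statement to a standard fact about permutation representations: when a finite group permutes a basis of a vector space, the dimension of the invariant subspace equals the number of orbits on that basis. First I would fix the grading-compatible basis of $(\mathbb{K}[\mathcal{V}_n^0])_i$ supplied by Theorem \ref{1}(i), namely the squarefree monomials $x_1^{\varepsilon_1} x_2^{\varepsilon_2}\cdots x_m^{\varepsilon_m}$ with $\varepsilon_k \in \{0,1\}$ and $\varepsilon_1 + \cdots + \varepsilon_m = i$. Such a monomial records exactly which $i$ of the $m$ possible edges are present, so it corresponds bijectively to a simple graph on $n$ vertices with $i$ edges, namely $\sum_{\varepsilon_k = 1}\textbf{e}_k$. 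This yields a bijection between the chosen basis of $(\mathbb{K}[\mathcal{V}_n^0])_i$ and the set of labelled simple graphs on $n$ vertices with $i$ edges.

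Next I would verify that the $S_n^{(2)}$-action on these monomials is precisely the action of $S_n$ on labelled graphs by relabelling of vertices. By definition $\sigma$ sends the edge $\{i,j\}$ to $\{\sigma(i),\sigma(j)\}$, hence permutes the basis vectors $\textbf{e}_k$ and dually the variables $x_k$; under the bijection above this is exactly the map sending a labelled graph to its image under the vertex permutation $\sigma$. Consequently two labelled graphs lie in the same $S_n^{(2)}$-orbit of monomials if and only if they are isomorphic as simple graphs, so the orbits of $S_n^{(2)}$ on the degree-$i$ squarefree monomials are in bijection with the isomorphism classes of simple graphs on $n$ vertices with $i$ edges.

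Finally I would compute $\dim (\mathbb{K}[\mathcal{V}_n^0]^{S^{(2)}_n})_i$ as the number of orbits. Since the Reynolds operator $R$ preserves degree and projects onto invariants, the space $(\mathbb{K}[\mathcal{V}_n^0]^{S^{(2)}_n})_i$ is spanned by the elements $R(x_1^{\varepsilon_1}\cdots x_m^{\varepsilon_m})$ of degree $i$. By orbit--stabilizer, $R$ applied to a squarefree monomial equals the average of the monomials in its orbit, hence a nonzero multiple of the orbit sum. Monomials from distinct orbits are distinct basis vectors, so orbit sums from different orbits have disjoint support and are linearly independent; moreover every invariant $f$ equals $R(f)$ and is therefore a linear combination of such orbit sums. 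Thus the number of linearly independent orbit sums equals both $\dim (\mathbb{K}[\mathcal{V}_n^0]^{S^{(2)}_n})_i$ and the number of orbits, which by the previous step is the number of non-isomorphic simple graphs with $n$ vertices and $i$ edges.

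The combinatorial bijections are the easy parts; the main point to get right is that the orbit sums form not merely a spanning set but a basis of the invariant component, that is, that distinct orbits really do contribute linearly independent invariants and that no further invariants exist. This is what pins the dimension to the orbit count exactly rather than giving only an inequality, and it is precisely where the permutation-representation structure, with a group-permuted basis and the Reynolds operator as projector, is essential.
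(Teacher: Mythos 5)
Your proof is correct and reaches the theorem by the same underlying mechanism as the paper: both count orbits of $S_n^{(2)}$ on the squarefree degree-$i$ monomials, identify those orbits with isomorphism classes of simple graphs with $n$ vertices and $i$ edges, and use the Reynolds operator to show that the invariant component has dimension equal to the orbit count. The difference lies in how the dimension-equals-orbit-count step is justified, and there your version is the more solid one. The paper decomposes $(\mathbb{K}[\mathcal{V}_n^0])_i$ into what it calls irreducible $S_n^{(2)}$-submodules, each claimed to have a basis of monomials; taken literally this is incorrect, since a submodule spanned by an orbit of monomials of size greater than one is never irreducible (it always contains the invariant line spanned by the orbit sum), and genuinely irreducible summands need not admit monomial bases at all. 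What the paper evidently intends are the orbit modules, which is exactly what you work with. Your chain of observations --- $R$ applied to a monomial is a nonzero multiple of its orbit sum, orbit sums from distinct orbits have disjoint supports and are therefore linearly independent, and every invariant $f$ satisfies $f=R(f)$ and hence lies in their span --- supplies precisely the justification that the paper's appeal to irreducibility was meant to provide, stated as the standard fact about permutation representations. So the route is the same, but your write-up repairs a genuine defect in the paper's phrasing rather than inheriting it.
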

\begin{proof} 
The $S^{(2)}_n$-module  $(\mathbb{K}[\mathcal{V}_n^0]/I_m)_i$ is generated by the monomials  $$x_1^{\varepsilon_1}x_2^{\varepsilon_2} \cdots x_m^{\varepsilon_m}, \varepsilon_1+\varepsilon_2+\cdots+\varepsilon_m=i, \varepsilon_k=0 \text{ or }  \varepsilon_k=1.$$ 
Since the group  $S^{(2)}_n$ is finite  then the module  $(\mathbb{K}[\mathcal{V}_n^0]/I_m)_i$ is decomposed into the direct sum of its  irreducible $S^{(2)}_n$-submodules:
$$
(\mathbb{K}[\mathcal{V}_n^0])_i=M_1 \oplus M_2 \oplus \cdots \oplus M_p.
$$
Each of these submodules  has a basis generated by  the monomials of the form  $x_1^{\varepsilon_1}x_2^{\varepsilon_2} \cdots x_m^{\varepsilon_m}$. Let us choose for each  $M_i$ the corresponding basis monomials  $m_1, m_2,\ldots,m_p$ and consider the  invariants  $R(m_1), R(m_2),\ldots,R(m_p).$ By the construction they are different and linearly independent. Therefore the component  $(\mathbb{K}[\mathcal{V}_n^0]^{S^{(2)}_n})_i$ is the sum of one-dimensional  $S^{(2)}_n$-submodules 
$$
(\mathbb{K}[\mathcal{V}_n^0]^{S^{(2)}_n})_i=\left\langle R(m_1)\right\rangle +\left\langle R(m_2)\right\rangle + \cdots + \left\langle R(m_p)\right\rangle,
$$
and  $\dim(\mathbb{K}[\mathcal{V}_n^0]^{S^{(2)}_n})_i=p$ for some $p.$ To each of monomial  $m_1, m_2,\ldots,m_p$ assign a simple graph in the following way: if  $m_i=x_1^{\varepsilon_1}x_2^{\varepsilon_2} \cdots x_m^{\varepsilon_m}$ then the corresponding  simple graph has the form  
$$
G_{m_i}={\varepsilon_1}\textbf{e}_{1}+{\varepsilon_2}\textbf{e}_{2}+ \cdots +{\varepsilon_m}\textbf{e}_{m}.
$$
Since the monomials  $m_1, m_2,\ldots,m_p$ belong to the different irreducible  $S^{(2)}_n$-modules  then  $G_{m_i}$ are non-isomorphic and they exhausted all the possible classes of isomorphic classes of isomorphic graphs with $n$ vertices and $i$ edges.
\end{proof}

Let us recall that the ordinary  generating function for the sequence   $\dim(\mathbb{K}[\mathcal{V}_n^0]^{S^{(2)}_n})_i$ 
$$
\mathcal{P}(\mathbb{K}[\mathcal{V}_n^0]^{S^{(2)}_n},z)=\sum_{i=0}^m \dim(\mathbb{K}[\mathcal{V}_n^0]^{S^{(2)}_n})_i \cdot z^i.
$$
is called the Poincar\'e series of the algebra $\mathbb{K}[\mathcal{V}_n^0]^{S^{(2)}_n}.$

The  Theorem \ref{t2} implies that 

$$
g_{n}(z)=\mathcal{P}(\mathbb{K}[\mathcal{V}_n^0]^{S^{(2)}_n},z).
$$

In the following theorem we derived   explicit formulas for the series 
 $\displaystyle  \mathcal{P}(\mathbb{K}[\mathcal{V}_n^0]^{S^{(2)}_n},z)$ and $\displaystyle \mathcal{P}(\mathbb{K}[\mathcal{V}_n]^{S^{(2)}_n},z).$

\begin{te}
Let the group  $S_n^{(2)}$ be  realized as   $m \times m $ matrices. Then  
$$
\begin{array}{ll}
(i) & \displaystyle  \mathcal{P}(\mathbb{K}[\mathcal{V}_n^0]^{S^{(2)}_n},z)=\frac{1}{n!} \sum_{\alpha \in S^{(2)}_n} \frac{\det(\text{{\rm \textbf{1}$_m$}}-\alpha \cdot z^2)}{\det(\text{{\rm \textbf{1}$_m$}}-\alpha \cdot z)},\\
(ii) & \displaystyle \mathcal{P}(\mathbb{K}[\mathcal{V}_n]^{S^{(2)}_n},z)=\frac{1}{n!} \sum_{\alpha \in S^{(2)}_n} \frac{1}{\det(\text{{\rm \textbf{1}$_m$}}-\alpha \cdot z)}.
\end{array}
$$
here   $\text{{\rm \textbf{1}$_m$}}$ is the unit  $m \times m $ matrix. 
\end{te}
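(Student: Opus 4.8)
The plan is to prove both identities as two instances of a single graded-character computation, i.e.\ a Molien-type argument. I would start from the Reynolds operator $R=\frac{1}{n!}\sum_{\alpha\in S_n^{(2)}}\alpha$, which by construction is a projector onto the invariants and preserves each homogeneous component. Since $\mathbb{K}$ has characteristic zero, the trace of a projector equals the dimension of its image, so on each graded piece
$$
\dim\bigl(\mathbb{K}[\mathcal{V}_n^0]^{S_n^{(2)}}\bigr)_i=\operatorname{tr}\bigl(R\mid(\mathbb{K}[\mathcal{V}_n^0])_i\bigr)=\frac{1}{n!}\sum_{\alpha\in S_n^{(2)}}\operatorname{tr}\bigl(\alpha\mid(\mathbb{K}[\mathcal{V}_n^0])_i\bigr),
$$
and likewise for $\mathbb{K}[\mathcal{V}_n]$. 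Multiplying by $z^i$ and summing over $i$ then reduces each claim to computing, for a fixed $\alpha$, the graded trace generating function $\sum_i z^i\operatorname{tr}(\alpha\mid(\cdot)_i)$ and interchanging the two (finite, respectively convergent) sums.

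For part $(ii)$ I would use the classical diagonalization: if $\alpha$ has eigenvalues $\lambda_1,\dots,\lambda_m$ on the linear forms, then its eigenvalues on degree-$d$ polynomials are the degree-$d$ monomials in the $\lambda_k$, whence
$$
\sum_{d\ge 0}z^d\operatorname{tr}\bigl(\alpha\mid\mathbb{K}[\mathcal{V}_n]_d\bigr)=\prod_{k=1}^m\frac{1}{1-z\lambda_k}=\frac{1}{\det(\mathbf{1}_m-\alpha z)}.
$$
Averaging over $S_n^{(2)}$ yields $(ii)$ at once.

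The substance is part $(i)$, where the relevant grading is the square-free (edge-number) grading on the finite-dimensional algebra $\mathbb{K}[\mathcal{V}_n^0]\cong\mathbb{K}[x_1,\dots,x_m]/I_m$ furnished by Theorem~\ref{1}. The key observation I would exploit is that $\alpha$, viewed as a permutation of the index set $\{1,\dots,m\}$, permutes the square-free monomial basis $\{\prod_{j\in S}x_j:S\subseteq\{1,\dots,m\}\}$ via $S\mapsto\alpha(S)$ and preserves $|S|=i$; hence each component $(\mathbb{K}[\mathcal{V}_n^0])_i$ is $S_n^{(2)}$-stable and $\operatorname{tr}(\alpha\mid(\mathbb{K}[\mathcal{V}_n^0])_i)$ simply counts the $i$-element subsets fixed by $\alpha$. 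Such a subset is $\alpha$-invariant precisely when it is a union of cycles of $\alpha$, so writing $\ell_1,\dots,\ell_r$ for the cycle lengths and including or excluding each cycle wholly gives
$$
\sum_{i}z^i\operatorname{tr}\bigl(\alpha\mid(\mathbb{K}[\mathcal{V}_n^0])_i\bigr)=\prod_{c=1}^r\bigl(1+z^{\ell_c}\bigr).
$$

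It then remains to recognize this product as the claimed determinant ratio. Since the characteristic polynomial of a permutation matrix factors over its cycles as $\det(\mathbf{1}_m-\alpha t)=\prod_{c=1}^r(1-t^{\ell_c})$, substituting $t=z$ and $t=z^2$ and dividing gives
$$
\frac{\det(\mathbf{1}_m-\alpha z^2)}{\det(\mathbf{1}_m-\alpha z)}=\prod_{c=1}^r\frac{1-z^{2\ell_c}}{1-z^{\ell_c}}=\prod_{c=1}^r\bigl(1+z^{\ell_c}\bigr),
$$
which matches the graded trace computed above; averaging over $S_n^{(2)}$ delivers $(i)$. I do not expect a serious obstacle here: the genuinely new point is merely that on the truncated simple-graph algebra the per-element character is $\prod_c(1+z^{\ell_c})$ rather than $1/\prod_c(1-z^{\ell_c})$, and the only care needed is to confirm that the edge-number grading is $S_n^{(2)}$-stable (immediate, as $\alpha$ permutes basis monomials of equal degree) so that the Reynolds projector restricts to each component and the interchange of the sum over $\alpha$ with the formal series in $z$ is legitimate.
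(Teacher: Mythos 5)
Your proof is correct and follows essentially the same route as the paper: for each $\alpha$ you compute the graded trace on the square-free algebra by counting $\alpha$-invariant subsets (unions of cycles), obtain $\prod_c(1+z^{\ell_c})$, identify this with $\det(\mathbf{1}_m-\alpha z^2)/\det(\mathbf{1}_m-\alpha z)$ via the cycle factorization of the characteristic polynomial, and average with the Reynolds projector, exactly as in the paper's chain of lemmas. The only differences are cosmetic: you merge the paper's partition-counting Lemma and its generating-function Lemma into one direct subset-counting step, and you actually sketch the Molien argument for part $(ii)$ where the paper simply cites it.
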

\begin{proof}

$(i)$  The vector space $(\mathbb{K}[\mathcal{V}_n^0])_1$  has  the basis  $\left\langle x_1,x_2, \ldots,x_m \right\rangle$ and a permutation    $\alpha \in S^{(2)}_n$  acts  on the  $(\mathbb{K}[\mathcal{V}_n^0])_1$ by permutation of the basis vectors. Denote this  linear operator by $A_{\alpha}$.     Let us expand this operator on the component   $(\mathbb{K}[\mathcal{V}_n^0]^{S^{(2)}_n})_k$ as endomorphism    and denote it  by  $A_{\alpha}^{(k)}$. Since $A_{\alpha}^{(k)}$ is endomorphism and acts as a permutation of the basis vectors of  $(\mathbb{K}[\mathcal{V}_n^0]^{S^{(2)}_n})_k$  then the action of the operator  $A_{\alpha}^{(k)}$  is defined correctly.

Let a permutation $\alpha$   be written uniquely as a product 
of disjoint cycles  and let $j_i(\alpha)$ be the number 
of cycles of length $i$ in the disjoint cycle decomposition of $\alpha$.

Now find the track of the operator  $A_{\alpha}^{(k)}.$
\begin{lm}
$$
{\rm Tr}(A_{\alpha}^{(i)})=\sum_{\beta_1+2 \beta_2+\cdots+m \beta_m=i} \binom{j_1(\alpha)}{\beta_1}\binom{j_2(\alpha)}{\beta_2} \cdots \binom{j_i(\alpha)}{\beta_i}.
$$
\end{lm}
\begin{proof}
Since the operator  $A_{\alpha}^{(i)}$ acts by permutations of the basis vectors of the vector space  $(\mathbb{K}[\mathcal{V}_n^0])_i$  that its track equal to the numbers of its fixed point.

For  $i=1$ we have  $(\mathbb{K}[\mathcal{V}_n^0])_1=\left\langle x_1,x_2, \ldots,x_m\right\rangle$  and $A_{\alpha}^{(1)}(x_s)=x_{\alpha^{-1}(s)}$. Thus   ${\rm Tr}(A_{\alpha}^{(1)})=j_1(\alpha).$

For  $i=2$ let us find out the number of fixed points of the operator   $A_{\alpha}^{(2)}$ which acts on the vector space  $(\mathbb{K}[\mathcal{V}_n^0])_2$  with the basis vectors  $x_i x_j, i<j.$ 
 An arbitrary pair of fixed points of the operator  $A_{\alpha}$ form one fixed point for the operator  $A_{\alpha}^{(2)}$. Thus we get  $\binom{j_1(\alpha)}{2}$ such points. Also, every transposition define one fixed point. Therefore 
$$
{\rm Tr}(A_{\alpha}^{(2)})=\displaystyle \binom{j_1(\alpha)}{2}+j_2(\alpha).
$$

All $j_1(\alpha)$ fixed points of the permutation  $\alpha$  generates $\binom{j_1(\alpha)}{3}$ fixed points of the operator  $A_{\alpha}^{(3)}$. Every fixed point of  $A_{\alpha}$ together with  $j_2(\alpha)$ transposition generate one fixed point of  $A_{\alpha}^{(3)}$. At  last, each any of  $3$-cycle  of $\alpha$  generates one fixed point for  $A_{\alpha}^{(3)}$. Then
$$
{\rm Tr}(A_{\alpha}^{(3)})=\displaystyle \binom{j_1(\alpha)}{3}+j_1(\alpha) j_2(\alpha)+j_3(\alpha).
$$
Analogously
$$
{\rm Tr}(A_{\alpha}^{(4)})=\displaystyle \binom{\alpha_1}{4}+\binom{\alpha_1}{2} \alpha_2+\binom{\alpha_2}{2}+\alpha_1 \alpha_3+\alpha_4.
$$
In the general case  any partition of  $i$
$$
\beta_1+2 \beta_2+\cdots+m \beta_m=i.
$$
generates 
 $$
 \binom{j_1(\alpha)}{\beta_1}\binom{j_2(\alpha)}{\beta_2} \cdots \binom{j_m(\alpha)}{\beta_m}
 $$
fixed points of the operator $A_{\alpha}^{(i)}.$ Therefore
$$
{\rm Tr}(A_{\alpha}^{(i)})=\sum_{\beta_1+2 \beta_2+\cdots+m \beta_m=i} \binom{j_1(\alpha)}{\beta_1}\binom{j_2(\alpha)}{\beta_2} \cdots \binom{j_m(\alpha)}{\beta_m}.
$$
\end{proof}

\begin{lm}
$$
\sum_{i=0}^{m} {\rm Tr}(A_{\alpha}^{(i)})z^i=(1+z)^{j_1(\alpha)} (1+z^2)^{j_2(\alpha)} \cdots (1+z^m)^{j_m(\alpha)}.
$$
\end{lm}
\begin{proof} We  have 

\begin{gather*}
\sum_{i=0}^{m}{\rm Tr}(A_{\alpha}^{(i)}) z^i=\sum_{\beta_1+2 \beta_2+\cdots+m \beta_m=i} \binom{j_1(\alpha)}{\beta_1}\binom{j_2(\alpha)}{\beta_2} \cdots \binom{j_m(\alpha)}{\beta_i}z^i=\\=
\sum_{\beta_1+2 \beta_2+\cdots+m \beta_m=i} \binom{j_1(\alpha)}{\beta_1}\binom{j_2(\alpha)}{\beta_2} \cdots \binom{j_i(\alpha)}{\beta_i}z^{\beta_1+2 \beta_2+\cdots+m \beta_m}=\\=\sum_{\beta_1+2 \beta_2+\cdots+m \beta_m=i} \binom{j_1(\alpha)}{\beta_1}z^{\beta_1}\binom{j_2(\alpha)}{\beta_2} (z^2)^{\beta_2}\cdots \binom{j_i(\alpha)}{\beta_i}(z^m)^{\beta_m}=\\=
\left(\sum_{\beta_1=0}^m \binom{j_1(\alpha)}{\beta_1}z^{\beta_1} \right) \left(\sum_{\beta_2=0}^m \binom{j_2(\alpha)}{\beta_2} (z^2)^{\beta_2} \right) \cdots \left(\sum_{\beta_m=0}^m  \binom{j_m(\alpha)}{\beta_m}(z^m)^{\beta_m} \right)=\\
=(1+z)^{j_1(\alpha)} (1+z^2)^{j_2(\alpha)} \cdots (1+z^m)^{j_m(\alpha)}.
\end{gather*}

\end{proof}

\begin{lm}
$$
\sum_{i=0}^{m} {\rm Tr}(A_{\alpha}^{(i)})z^i=\frac{\det(\text{{\rm \textbf{1}$_m$}}-A_{\alpha} \cdot z^2)}{\det(\text{{\rm \textbf{1}$_m$}}-A_{\alpha} \cdot z)}.
$$
\end{lm}
\begin{proof}
Let  $\lambda_1,\lambda_2, \ldots, \lambda_m$ be the eigenvalues of the operator  $A_{\alpha}.$  Since   $A_{\alpha}^{m!}$  is the identity matrix  then all  eigenvalues  $\lambda_i$  are roots of  unity   of orders   $j_1(\alpha),j_2(\alpha) \ldots, j_m(\alpha)$. Therefore the characteristic polynomial of the operator   $A_{\alpha}$ has the form 
$$
\det(\text{{\rm \textbf{1}$_m$}}-A_{\alpha} z)=(1-\lambda_1 z) (1-\lambda_2 z)\cdots (1-\lambda_n z)=(1-z)^{j_1(\alpha)}(1-z^2)^{j_2(\alpha)} \cdots (1-z^m)^{j_m(\alpha)}.
$$
Now 
\begin{gather*}
\sum_{i=0}^{m} {\rm Tr}(A_{\alpha}^{(i)})z^i=(1+z)^{j_1(\alpha)} (1+z^2)^{j_2(\alpha)} \cdots (1+z^m)^{j_m(\alpha)}=\\=
\frac{(1-z^2)^{j_1(\alpha)}}{(1-z)^{j_1(\alpha)}}\, \frac{(1-z^4)^{j_2(\alpha)}}{(1-z^2)^{j_2(\alpha)}} \cdots \frac{(1-z^{2m})^{j_m(\alpha)}}{(1-z^m)^{j_m(\alpha)}}=\frac{\det(\text{{\rm \textbf{1}$_m$}}-A_{\alpha} \cdot z^2)}{\det(\text{{\rm \textbf{1}$_m$}}-A_{\alpha} \cdot z)}.
\end{gather*}
\end{proof}

Let us find the dimension of  $S^{(2)}_n$-invariant subspace  $(\mathbb{K}[\mathcal{V}_n^0])_i$. 

\begin{lm}
$$
\dim (\mathbb{K}[\mathcal{V}_n^0]^{S^{(2)}_n})_i=\frac{1}{n!}\sum_{\alpha \in G} {\rm Tr}(A_{\alpha}^{(i)}). 
$$
\end{lm}

\begin{proof} The dimension of the subspace  $(\mathbb{K}[\mathcal{V}_n^0]^{S^{(2)}_n})_i$ is equal to the number of eigenvectors that correspond to the eigenvalue $1$ and which are common eigenvectors for all operators   
 $A_{\alpha}^{(i)}$.
  Consider the average matrix 
$$
P^{(i)}=\frac{1}{|G|}\sum_{g \in G}A_{\alpha}^{(i)}.
$$

Since  the Reynolds operator is a projector from  $(\mathbb{K}[\mathcal{V}_n^0])_i$ into  $(\mathbb{K}[\mathcal{V}_n^0]^{S^{(2)}_n})_i$ then it has the  only eigenvalues  $1$  and  $0.$
Therefore the dimension of the space   $(\mathbb{K}[\mathcal{V}_n^0]^{S^{(2)}_n})_i$ is equal to the track of the matrix $P^{(i)}$.

\end{proof} 
Taking into account the  lemmas stated above we have 

\begin{gather*}
\mathcal{P}(\mathbb{K}[\mathcal{V}_n^0]^{S^{(2)}_n},z)=\sum_{i=0}^{m}\dim (\mathbb{K}[\mathcal{V}_n^0]^{S^{(2)}_n})_i z^i=\frac{1}{n!}\sum_{i=0}^{m}\left( \sum_{\alpha \in S^{(2)}_n} {\rm Tr}(A_{\alpha}^{(i)})  \right) z^i=\\=\frac{1}{n!}\sum_{g \in S^{(2)}_n}\left( \sum_{i=0}^{m}{\rm Tr}(A_{\alpha}^{(i)}) z^i  \right)
=\frac{1}{n!}\sum_{\alpha \in S^{(2)}_n}\frac{\det(\text{{\rm \textbf{1}$_m$}}-\alpha \cdot z^2)}{\det(\text{{\rm \textbf{1}$_m$}}-\alpha \cdot z)}.
\end{gather*}

$(ii)$  It is the Molien formula for the Poincar\'e series of the algebra invariants of the group  $S^{(2)}_n.$
\end{proof}

\end{document}